\documentclass[10pt]{article}
\usepackage{titling}
\usepackage{latexsym}
\usepackage{hyperref}
\usepackage[all]{xy}
\usepackage{xypic}

\usepackage{amsmath}
\usepackage{amssymb}
\usepackage{amsfonts}
\usepackage{amsthm}
\usepackage{mathrsfs}
\usepackage[utf8]{inputenc}
\usepackage{newunicodechar}
\usepackage{stmaryrd}
\usepackage{enumerate}
\usepackage{xcolor}
\usepackage{tikz}
\usetikzlibrary{shapes.geometric}
\usetikzlibrary{calc,arrows.meta}
\usepackage{graphicx}
\usepackage[multiple]{footmisc}

\DeclareUnicodeCharacter{00A0}{ }
\newunicodechar{α}{\ensuremath{\alpha}}
\newunicodechar{β}{\ensuremath{\beta}}
\newunicodechar{χ}{\ensuremath{\chi}}
\newunicodechar{δ}{\ensuremath{\delta}}
\newunicodechar{ε}{\ensuremath{\varepsilon}}
\newunicodechar{Δ}{\ensuremath{\Delta}}
\newunicodechar{η}{\ensuremath{\eta}}
\newunicodechar{γ}{\ensuremath{\gamma}}
\newunicodechar{Γ}{\ensuremath{\Gamma}}
\newunicodechar{ι}{\ensuremath{\iota}}
\newunicodechar{κ}{\ensuremath{\kappa}}
\newunicodechar{λ}{\ensuremath{\lambda}}
\newunicodechar{Λ}{\ensuremath{\Lambda}}
\newunicodechar{μ}{\ensuremath{\mu}}
\newunicodechar{ν}{\ensuremath{\nu}}
\newunicodechar{ω}{\ensuremath{\omega}}
\newunicodechar{Ω}{\ensuremath{\Omega}}
\newunicodechar{π}{\ensuremath{\pi}}
\newunicodechar{φ}{\ensuremath{\phi}}
\newunicodechar{Φ}{\ensuremath{\Phi}}
\newunicodechar{ψ}{\ensuremath{\psi}}
\newunicodechar{Ψ}{\ensuremath{\Psi}}
\newunicodechar{ρ}{\ensuremath{\rho}}
\newunicodechar{σ}{\ensuremath{\sigma}}
\newunicodechar{Σ}{\ensuremath{\Sigma}}
\newunicodechar{τ}{\ensuremath{\tau}}
\newunicodechar{θ}{\ensuremath{\theta}}
\newunicodechar{Θ}{\ensuremath{\Theta}}
\newunicodechar{ξ}{\ensuremath{\xi}}
\newunicodechar{ζ}{\ensuremath{\zeta}}

\newunicodechar{∞}{\ensuremath{\infty}}
\newunicodechar{Σ}{\ensuremath{\Sigma}}

\DeclareFontFamily{U}{wncy}{}
\DeclareFontShape{U}{wncy}{m}{n}{<->wncyr10}{}
\DeclareSymbolFont{mcy}{U}{wncy}{m}{n}
\DeclareMathSymbol{\Sh}{\mathord}{mcy}{"58}

\newtheorem{theo}{Theorem}
\newtheorem{prop}[theo]{Proposition}

\newtheorem{lemm}[theo]{Lemma}

\theoremstyle{definition}

\newtheorem{rema}[theo]{Remark}

\newtheorem{ques}[theo]{Question}

\newcommand{\cE}{\mathcal{E}}
\newcommand{\cG}{\mathcal{G}}
\newcommand{\GG}{\mathbb{G}}

\newcommand{\NN}{\mathbb{N}}
\newcommand{\OO}{\mathcal{O}}
\newcommand{\p}{\mathfrak{p}}
\newcommand{\PP}{\mathbb{P}}

\newcommand{\RR}{\mathbb{R}}
\newcommand{\cV}{\mathcal{V}}

\newcommand{\ZZ}{\mathbb{Z}}

\DeclareMathOperator{\Tor}{Tor}

\DeclareMathOperator{\coker}{coker}

\DeclareMathOperator{\Pic}{Pic}

\DeclareMathOperator{\Spec}{Spec}
\DeclareMathOperator{\Proj}{Proj}
\DeclareMathOperator{\Bl}{Bl}

\DeclareMathOperator{\trop}{trop}
\DeclareMathOperator{\relint}{relint}
\DeclareMathOperator{\Kdim}{Krull.dim}

\DeclareMathOperator{\Fan}{Fan}
\DeclareMathOperator{\Krulldim}{Krull\ dim.}
\DeclareMathOperator{\valdim}{val.dim.}
\DeclareMathOperator{\cdh}{cdh}

\newcommand{\length}{{\operatorname{length}}}
\newcommand{\xra}[1]{\xrightarrow{#1}}

\newcommand{\colim}{\varinjlim}

\newcommand{\sing}{\mathrm{sing}}

\definecolor{readableblue}{HTML}{3399FF}

\renewcommand{\hom}{\mathrm{hom}}

\usetikzlibrary{arrows.meta}

\title{
Some explicit counter-examples to \mbox{Weibel's conjecture}}
\author{Shane Kelly}

\begin{document}

\maketitle

\begin{abstract}
We give two related but different methods for constructing rings $R$ of Krull dimension 1 and $K_{-d}(R) \neq 0$. The first works for $d = 2$, and the second works for any $d \geq 2$.
\end{abstract}

In \cite{Wei80}%
\ Weibel asked the following question.

\begin{ques}[{\cite[Questions 2.9]{Wei80}%
}] \label{ques1}
If $R$ is a commutative noetherian ring of Krull dimension $d$, is $K_*(R) = 0$ for $* < -d$?
\end{ques}

This stimulated four decades of research, 
\cite[Thm.1.2]{Wei89}%
, 
\cite[Thm.4.4]{Wei01}%
, 
\cite[Thm.8.14]{Hae04}%
, 
\cite[Thm.6.2]{CHSW08}%
, 
\cite[Thm.1.3]{Kri08}%
, 
\cite[Thm.A]{GH10}%
, 
\cite[Thm.3.11]{Cis12}%
, 
\cite[Thm.3.5]{Kel13}%
, 
\cite[Thm.4.7]{Mor16}%
, 
\cite[Thm.1, Cor.2]{KS16}%
. The Kerz--Strunk--Tamme article 
\cite[Thm.B]{KST18}%
\ 
is generally considered the definitive answer. It is notable for resolving the question as stated, and being one of the first strict applications of derived algebraic geometry to answer a question about non-derived rings. None-the-less, research into this question has not abated, see for example
\cite[Thm.1.1]{HK19}%
, 
\cite[Thm.1.1]{Sad19}%
, 
\cite[Thm.0.1]{KS19}%
, 
\cite[Thm.1.1]{Sta20}%
, 
\cite[Thm.4.1]{Kha21}%
, 
\cite[Thm.D]{BKRS22}%
, 
\cite[Prop.4.5]{MM22}%
. 

In particular, for nonnoetherian rings the answer to Question~\ref{ques1} remains: yes, if dimension is replaced with \emph{valuative dimension} \cite[Rem.3.4]{KM18}%
,
\cite[Thm.2.4.15]{EHIK21}%
,
\cite[Thm.6.5]{KST25}%
. In short, the valuative dimension of an integral ring is the Krull dimension of its Riemann-Zariski space. Equivalently, it is the supremum of the Krull dimensions over all modifications,
\cite[Defs.2.2.1--2.2.2, Prop.2.3.8]{EHIK21},%
\[ \valdim A = \sup_{\substack{Y \to \Spec A \\ \textrm{proper}\\\textrm{birational}}} \Krulldim Y. \]
 When $R$ is noetherian, $\Krulldim R = \valdim R$ but in general $\Krulldim R \leq \valdim R$, 
\cite[Prop.2.3.2(8),(9)]{EHIK21}%
.

In this note we present the first known counter-examples to the non-noetherian version of Weibel's question.

\begin{theo}
For all $d \geq 2$, there exists a ring $R$ such that $\Spec R$ has exactly two points, and $K_{-d}(R) \cong \ZZ$.
\end{theo}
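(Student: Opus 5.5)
We construct $R$ as a one-dimensional local domain-like ring (a rank-one valuation-type ring glued with a high-dimensional residue field phenomenon) whose valuative dimension is large, and compute $K_{-d}$ via cdh/abstract-blowup descent. Concretely, let $k$ be a field and $F = k(x_1,\dots,x_{d})$, or better a field $L$ containing a $d$-dimensional projective-type variety's function field. We take a rank-one valuation ring $V$ with residue field $L$ (e.g. $V = L[[t]]$ or its Puiseux-type analogue), and set $R = k' + tV$, a pullback (Milnor square)
\[ \xymatrix{ R \ar[r] \ar[d] & V \ar[d] \\ k' \ar[r] & L } \]
where $k' \subset L$ is a subfield with $L/k'$ of transcendence degree $d-1$, chosen so that $\Spec R$ has exactly two points: the generic point and the closed point. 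Replacing $V$ by a henselian or complete DVR keeps $\Spec V$ two points. Then $\Spec R$ is homeomorphic to $\Spec V$, so has two points, while $\valdim R = 1 + \operatorname{trdeg}(L/k') = d$. This matches the expectation that $K_{-d}$ can be nonzero, and is where the counterexample to the Krull-dimension version comes from.

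The computation of $K_{-d}(R)$ proceeds by Milnor excision for the square above. Milnor squares satisfy excision for nonpositive $K$-theory, so there is a long exact sequence
\[ \cdots \to K_{-d+1}(V)\oplus K_{-d+1}(k') \to K_{-d+1}(L) \to K_{-d}(R) \to K_{-d}(V)\oplus K_{-d}(k') \to \cdots. \]
Since $V,k',L$ are regular, their negative $K$-groups vanish and this gives nothing in degrees $\le -2$. So the naive choice of $L$ as a field fails, and we must instead take $L$ (or rather the bottom-right corner) non-regular in a controlled way: replace the corner ring by a ring $B$ with $K_{-d+1}(B)\cong \ZZ$, obtained by induction on $d$. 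The plan is therefore inductive: the case $d=1$ (or $d=2$ as base) uses a classical example like a node, $K_{-1}(k[x,y]/(xy)\text{-type})\cong\ZZ$; given a two-point ring $R_{d-1}$ with $K_{-(d-1)}(R_{d-1})\cong \ZZ$ and residue field computations in place, we form $R_d$ as a pullback of a rank-one valuation ring $V$ with residue ring $R_{d-1}$ (a local ring, so the pullback $R_{d-1}+tV$ along $V\to V/tV$ is needed with $V/tV\supseteq R_{d-1}$), arranged so that $V\to V/\mathfrak m_V$ and $R_{d-1}\to$ its fraction-field data give the connecting map $K_{-(d-1)}(\text{corner}) \to K_{-d}(R_d)$ an isomorphism, all other terms vanishing since $V$ and the field corners have no negative $K$-theory (valuation rings have vanishing negative $K$-theory by Kerz--Strunk--Tamme/\cite{KST25}-type results).

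The main obstacle is arranging the topology simultaneously with the $K$-theory: each pullback step must keep $\Spec$ at exactly two points, which forces the corner rings to be zero-dimensional (essentially fields or artinian-like on the nose), whereas nonvanishing negative $K$-theory of the corner seems to need positive dimension. I would resolve this by using non-noetherian zero-dimensional-spectrum-but-large-valuative-dimension rings at each stage: take the corner to be $R_{d-1}$ itself but embedded so that the composite is a local ring whose prime spectrum collapses (e.g. gluing the closed point of $R_{d-1}$ to the closed point of $V$ and its generic point into $tV$ via a field extension that is purely transcendental of degree one), and verify via descent (cdh/rh excision in \cite{EHIK21}, \cite{KST25}) that the connecting homomorphism is an isomorphism onto $\ZZ$.
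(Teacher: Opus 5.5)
There is a genuine gap. The proposal never specifies a ring, and the one inductive step you do describe cannot work. Take $R_d = R_{d-1} + tV = V \times_{V/tV} R_{d-1}$ with $V$ of rank one. The corner $V/tV$ has a one-point spectrum, so by nil-invariance and Remark~\ref{rema:dequals1v2} it has no negative $K$-theory. Since also $K_0(V) \to K_0(V/tV) \cong \ZZ$ is onto and $K_{<0}(V) = 0$, the Mayer--Vietoris sequence gives $K_{-j}(R_d) \cong K_{-j}(R_{d-1})$ for every $j \geq 1$. So there is no degree shift. The class in $K_{1-d}(R_{d-1})$ just reappears in $K_{1-d}(R_d)$, and $K_{-d}(R_d) \cong K_{-d}(R_{d-1})$ is not controlled by your induction hypothesis. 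The topology fails too: $\Spec R_d$ is a pushout of two two-point spaces along a point, so it has three points.

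The composite that your last paragraph seems to point to does not help either. Take $A = W \times_{\mathrm{Frac}(R_{d-1})} R_{d-1}$ with $W$ a rank-one valuation ring, and collapse $V(\mathfrak m_W)$ to a point. This does give two points, but $K_{1-d}(A) \to K_{1-d}(A/\mathfrak m_W) = K_{1-d}(R_{d-1})$ is then an isomorphism, so the boundary map into $K_{-d}$ is zero. Your base case is also off: $K_{-1}(k[x,y]/(xy)) = 0$. You need a cycle of branches, e.g.\ the nodal cubic localised at its node. Starting the induction at $d=2$ would presuppose the first nontrivial case of the theorem.

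The obstruction you name is misdiagnosed, and this is where the missing idea lies. For $R = A \times_{A/I} k$ with $A \to A/I$ surjective, $\Spec R$ is the pushout $\Spec A \sqcup_{\Spec A/I} \Spec k$, which collapses all of $V(I)$ to one point. Having two points only requires every non-generic point of $\Spec A$ to lie in $V(I)$; it puts no bound on $\dim A/I$. In fact the corner must be positive-dimensional, by Remark~\ref{rema:dequals1v2}.

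What you need is a single ring $A$ with $K_{<0}(A) = 0$, whose only point off $V(I)$ is the generic point, and with $K_{1-d}(A/I) \cong \ZZ$. Then $K_{-d}(R) \cong K_{1-d}(A/I)$ directly, with no induction. The paper builds this as follows:
\begin{itemize}
\item Set $A = R_\infty = \colim R_\Sigma$, where $\Sigma$ runs over smooth projective subdivisions of a complete fan and $R_\Sigma$ is the semilocalisation of $X_\Sigma$ at its torus-fixed points; $I_\infty$ is the ideal of the toric boundary.
\item $K_{<0}(R_\infty) = 0$ because it is a filtered colimit of regular rings.
\item A tropical argument shows that any subvariety meeting the torus misses all fixed points after a suitable subdivision. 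Hence $V(I_\infty)$ contains every non-generic point.
\item $K_{1-d}(R_\Sigma/I_\Sigma) \cong H^{d-1}_{\cdh}(R_\Sigma/I_\Sigma,\ZZ)$ is computed by the \v{C}ech complex of the closed cover by boundary divisors. It equals $\ZZ$, compatibly with subdivision.
\end{itemize}
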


\begin{rema} \label{rema:dequals1v2}
Below $d = 2$, namely a ring $R$ with $\Krulldim R = 0$ and $K_{-1}(R) \neq 0$ is well-known to be impossible via standard techniques. Indeed, negative $K$-theory is nilinvariant, \cite[III.2, Props.2.10, 2.12; XII.7, p.664]{Bas68}%
, so we can assume $R$ is reduced. A reduced ring of Krull dimension zero is absolutely flat, \cite[092F]{stacks-project}%
. For every $d\geq0$, the ring $R[T_1,\ldots,T_d]$ is coherent \cite[Thm.1]{Sab74} %
and every finitely presented $R[T_1,\ldots,T_d]$-module has finite projective dimension \cite[068X, 0EWZ, 05CX, 00OQ]{stacks-project}%
. Thus $R$ is regular and stably coherent, and hence has no negative $K$-theory, \cite[proof of Thm.3.3]{KM18}%
.

On the other hand, Christopher La Fond has some interesting work in progress about building schemes of dimension zero with non-vanishing negative $K$-theory in the non-qcqs setting.
\end{rema}

\emph{Some topology.} The idea for the construction is an algebraic version of the construction of the $d$-sphere (a CW complex with one $0$-cell and one $d$-cell) as a $d$-ball modulo its boundary $S^d = D^d / \partial D^d$. Model independent homotopy theorists will mentally replace $D^d$ with a point and model category theorists may replace $D^d$ with an arbitrary contractible space equipped with an embedding of $S^{d-1}$ such as $S^{d-1} \subseteq \RR^d$. 

\emph{Sketch of the construction.} Our $K_{<0}$-contractible space $\Spec R_\infty$ is a semilocalisation of a Riemann-Zariski space. In general these have no negative $K$-theory,%
\footnote{Their local rings are valuation rings, which have no negative $K$-groups, \cite[Thm.1.3(iii)]{KM18}%
. It follows that their structure sheaves are coherent: for a nonzero map $f=(a_1,\ldots,a_n):\mathcal O^n\to\mathcal O$, the valuation property allows us, locally, to choose $a_i$ dividing every $a_j$. Write $a_j=a_i b_j$ with $b_i=1$. Multiplication by $a_i$ is injective because $\mathcal O$ is a subsheaf of the constant function-field sheaf, while $(b_1,\ldots,b_n)$ is split surjective. Thus $\ker(f)$ is finite locally free.} %
but we will not need this result as we explicitly construct it as a filtered limit of regular schemes $R_∞ = \colim R_λ$. 

Our embedded $S^{d-1}$ is a configuration of hypersurfaces forming a $(d{-}1)$-dimensional polyhedron with infinitely many faces. As with $R_∞$, it is constructed as a limit $R_∞/I_∞ = \colim R_λ / I_λ$. The reader is invited to picture a polyhedron, iteratively replace each corner with a smaller face, and observe that limit will resemble a sphere. %

\begin{equation} \label{equa:corners}
\begin{tikzpicture}[
  line width=.6pt,
  line cap=round,
  line join=round,
  >={Stealth[length=5pt]}
]
\begin{scope}[shift={(-3,0)}]
  \coordinate (PT) at (0,1.25);
  \coordinate (PA) at (-1.05,-.72);
  \coordinate (PB) at (1.05,-.72);
  \coordinate (PC) at (.38,.02);

  \path[fill=black!4] (PT)--(PA)--(PB)--cycle;
  \path[fill=black!7] (PT)--(PB)--(PC)--cycle;
  \draw (PT)--(PA)--(PB)--(PT)--(PC)--(PB);
  \draw (PA)--(PC);
\end{scope}

\begin{scope}
  \coordinate (MT) at (0,1.25);
  \coordinate (MA) at (-1.05,-.72);
  \coordinate (MB) at (1.05,-.72);
  \coordinate (MC) at (.38,.02);
  \def\cut{.24}

  \coordinate (MtA) at ($(MT)!\cut!(MA)$);
  \coordinate (MtB) at ($(MT)!\cut!(MB)$);
  \coordinate (MtC) at ($(MT)!\cut!(MC)$);
  \coordinate (MaT) at ($(MA)!\cut!(MT)$);
  \coordinate (MaB) at ($(MA)!\cut!(MB)$);
  \coordinate (MaC) at ($(MA)!\cut!(MC)$);
  \coordinate (MbT) at ($(MB)!\cut!(MT)$);
  \coordinate (MbA) at ($(MB)!\cut!(MA)$);
  \coordinate (MbC) at ($(MB)!\cut!(MC)$);
  \coordinate (McT) at ($(MC)!\cut!(MT)$);
  \coordinate (McA) at ($(MC)!\cut!(MA)$);
  \coordinate (McB) at ($(MC)!\cut!(MB)$);

  \path[fill=black!4]
    (MtA)--(MaT)--(MaB)--(MbA)--(MbT)--(MtB)--cycle;
  \path[fill=black!7]
    (MtB)--(MbT)--(MbC)--(McB)--(McT)--(MtC)--cycle;

  \draw
    (MtA)--(MtB)--(MtC)--cycle
    (MaT)--(MaB)--(MaC)--cycle
    (MbT)--(MbA)--(MbC)--cycle
    (McT)--(McA)--(McB)--cycle;
  \draw
    (MtA)--(MaT)
    (MtB)--(MbT)
    (MtC)--(McT)
    (MaB)--(MbA)
    (MaC)--(McA)
    (MbC)--(McB);
\end{scope}

\begin{scope}[shift={(3,0)}]
  \coordinate (NT) at (0,1.34);
  \coordinate (NA) at (-1.13,-.78);
  \coordinate (NB) at (1.13,-.78);
  \coordinate (NC) at (.41,.02);
  \def\firstcut{.24}
  \def\secondcut{.22}

  \coordinate (N0)  at ($(NT)!\firstcut!(NA)$);
  \coordinate (N1)  at ($(NT)!\firstcut!(NB)$);
  \coordinate (N2)  at ($(NT)!\firstcut!(NC)$);
  \coordinate (N3)  at ($(NA)!\firstcut!(NT)$);
  \coordinate (N4)  at ($(NA)!\firstcut!(NB)$);
  \coordinate (N5)  at ($(NA)!\firstcut!(NC)$);
  \coordinate (N6)  at ($(NB)!\firstcut!(NT)$);
  \coordinate (N7)  at ($(NB)!\firstcut!(NA)$);
  \coordinate (N8)  at ($(NB)!\firstcut!(NC)$);
  \coordinate (N9)  at ($(NC)!\firstcut!(NT)$);
  \coordinate (N10) at ($(NC)!\firstcut!(NA)$);
  \coordinate (N11) at ($(NC)!\firstcut!(NB)$);

  \coordinate (N0x1) at ($(N0)!\secondcut!(N1)$);
  \coordinate (N0x2) at ($(N0)!\secondcut!(N2)$);
  \coordinate (N0x3) at ($(N0)!\secondcut!(N3)$);
  \coordinate (N1x0) at ($(N1)!\secondcut!(N0)$);
  \coordinate (N1x2) at ($(N1)!\secondcut!(N2)$);
  \coordinate (N1x6) at ($(N1)!\secondcut!(N6)$);
  \coordinate (N2x0) at ($(N2)!\secondcut!(N0)$);
  \coordinate (N2x1) at ($(N2)!\secondcut!(N1)$);
  \coordinate (N2x9) at ($(N2)!\secondcut!(N9)$);
  \coordinate (N3x0) at ($(N3)!\secondcut!(N0)$);
  \coordinate (N3x4) at ($(N3)!\secondcut!(N4)$);
  \coordinate (N3x5) at ($(N3)!\secondcut!(N5)$);
  \coordinate (N4x3) at ($(N4)!\secondcut!(N3)$);
  \coordinate (N4x5) at ($(N4)!\secondcut!(N5)$);
  \coordinate (N4x7) at ($(N4)!\secondcut!(N7)$);
  \coordinate (N5x3) at ($(N5)!\secondcut!(N3)$);
  \coordinate (N5x4) at ($(N5)!\secondcut!(N4)$);
  \coordinate (N5x10) at ($(N5)!\secondcut!(N10)$);
  \coordinate (N6x1) at ($(N6)!\secondcut!(N1)$);
  \coordinate (N6x7) at ($(N6)!\secondcut!(N7)$);
  \coordinate (N6x8) at ($(N6)!\secondcut!(N8)$);
  \coordinate (N7x4) at ($(N7)!\secondcut!(N4)$);
  \coordinate (N7x6) at ($(N7)!\secondcut!(N6)$);
  \coordinate (N7x8) at ($(N7)!\secondcut!(N8)$);
  \coordinate (N8x6) at ($(N8)!\secondcut!(N6)$);
  \coordinate (N8x7) at ($(N8)!\secondcut!(N7)$);
  \coordinate (N8x11) at ($(N8)!\secondcut!(N11)$);
  \coordinate (N9x2) at ($(N9)!\secondcut!(N2)$);
  \coordinate (N9x10) at ($(N9)!\secondcut!(N10)$);
  \coordinate (N9x11) at ($(N9)!\secondcut!(N11)$);
  \coordinate (N10x5) at ($(N10)!\secondcut!(N5)$);
  \coordinate (N10x9) at ($(N10)!\secondcut!(N9)$);
  \coordinate (N10x11) at ($(N10)!\secondcut!(N11)$);
  \coordinate (N11x8) at ($(N11)!\secondcut!(N8)$);
  \coordinate (N11x9) at ($(N11)!\secondcut!(N9)$);
  \coordinate (N11x10) at ($(N11)!\secondcut!(N10)$);

  \path[fill=black!4]
    (N0x1)--(N0x3)--(N3x0)--(N3x4)--
    (N4x3)--(N4x7)--(N7x4)--(N7x6)--
    (N6x7)--(N6x1)--(N1x6)--(N1x0)--cycle;
  \path[fill=black!7]
    (N1x2)--(N1x6)--(N6x1)--(N6x8)--
    (N8x6)--(N8x11)--(N11x8)--(N11x9)--
    (N9x11)--(N9x2)--(N2x9)--(N2x1)--cycle;

  \draw
    (N0x1)--(N0x2)--(N0x3)--cycle
    (N1x0)--(N1x2)--(N1x6)--cycle
    (N2x0)--(N2x1)--(N2x9)--cycle
    (N3x0)--(N3x4)--(N3x5)--cycle
    (N4x3)--(N4x5)--(N4x7)--cycle
    (N5x3)--(N5x4)--(N5x10)--cycle
    (N6x1)--(N6x7)--(N6x8)--cycle
    (N7x4)--(N7x6)--(N7x8)--cycle
    (N8x6)--(N8x7)--(N8x11)--cycle
    (N9x2)--(N9x10)--(N9x11)--cycle
    (N10x5)--(N10x9)--(N10x11)--cycle
    (N11x8)--(N11x9)--(N11x10)--cycle;

  \draw
    (N0x1)--(N1x0)
    (N1x2)--(N2x1)
    (N2x0)--(N0x2)
    (N3x4)--(N4x3)
    (N4x5)--(N5x4)
    (N5x3)--(N3x5)
    (N6x7)--(N7x6)
    (N7x8)--(N8x7)
    (N8x6)--(N6x8)
    (N9x10)--(N10x9)
    (N10x11)--(N11x10)
    (N11x9)--(N9x11)
    (N0x3)--(N3x0)
    (N1x6)--(N6x1)
    (N2x9)--(N9x2)
    (N4x7)--(N7x4)
    (N5x10)--(N10x5)
    (N8x11)--(N11x8);
\end{scope}
\end{tikzpicture}
\end{equation}

In our actual construction, the corner replacements are blowups with centre inside a sncd (the polyhedron),%
\footnote{ 
In fact, picturing each successive face as smaller than the previous ones is mathematically justified by work of Yasuda \cite[Cor.4.10, Prop.4.11, Thm.7.4, §8]{Yas17}, %
extending mass formulas %
of Serre, \cite[Thm.2, p.13]{Ser82}%
,  Bhargava, \cite[Thm.1.1]{Bha07}%
, and Kedlaya, \cite[Thm.8.5]{Ked07} %
(recovered for $p \neq 2$; see \cite[§5]{WY17}%
). %
For motivic refinements of the Krasner--Serre--Bhargava formulas, see \cite[Thm.1.1]{Yas24}%
.
} %
 and one actually has to blowup all face intersections, not just those of dimension zero.

The purpose of all these blowups is to slide all irrelevant subvarieties off the corners. Then taking the semi-localisation at these corners removes them entirely. So the only non-$\Spec R_∞/I_∞$ point of $\Spec R_∞$ is the generic point.

Experts will be unsurprised that the analogue of the quotient $S^d = D^d / \partial D^d = D^d \sqcup_{\partial D^d} \ast$ is achieved using a Milnor square $R = R_∞ \times_{R_∞ / I_∞} k$. The $K_{1-d}$ of our boundary $\Spec R_∞ / I_∞$ is calculated using the identification $K_{-\dim X}(X) \cong H_{\cdh}^{\dim X}(X, \ZZ)$, \cite[Cor.D]{KST18}%
, and the identification of $H_{\cdh}^{d-1}(\Spec R_∞ / I_∞, \ZZ)$ with the \v{C}ech cohomology of the (closed) covering of $\Spec R_∞ / I_∞$ by its irreducible components. That is, $H_{\cdh}^{d-1}(\Spec R_∞ / I_∞, \ZZ)$ is calculated by the same combinatorics as $H_{\sing}^{d-1}(-, \ZZ)$ of the corresponding $(d{-}1)$-dimensional polyhedron.


\emph{Computer assistance.} 
The idea to use Milnor squares was clear to everyone (the author knows of) who was looking for these examples. The author was originally unsuccessfully trying to glue valuation rings together in interesting configurations. 
The technique of using (finite) polygons to produce negative $K$-theory classes is also well-known to everyone. The calculation for a finite polygon is already implicit in Bass's conductor formula, \cite[XII.10.4(c)]{Bas68}%
. %
The infinite polygon construction surfaced as part of an unsuccessful attempt at a counterexample by a computer assistant;%
\footnote{
Mathematicians are currently being instrumentalised in a large scale advertising campaign by AI companies competing for market monopoly. %
Public grant money is being paid to companies which then receive free advertising from us, %
often while showing little regard for our research priorities. %
More seriously, as part of this advertising campaign, %
some companies are pushing misinformation about the goals of mathematical research. %
This misinformation has the real potential to influence government funding decisions in %
a way that funnels money away from scientific goals, and towards private interests. 

This advertising campaign has more serious collateral damage. %
Mathematics depends on a human research ecosystem which continually generates new problems %
and sustains the community capable of recognising and pursuing them. %
Human generated research problems are a scarce and essential part of this %
mathematical ecosystem. %
Conceptually incoherent and empirically unsupported rhetoric portraying mathematicians %
as replaceable threatens a pipeline of early career researchers already in precarious employment conditions. %
Damaging either undermines the infrastructure on which future mathematical progress depends.

While the author \emph{does} want to be transparent about the use of computer assistance in the preparation of the current manuscript, he does not want to participate in this advertising campaign and contribute to this destruction of the commons. As such, the model and company name will remain absent from this manuscript, but are available upon request.
} %
%
 the computer assistant found it in 
\cite[Ex.5.19]{Aok24} %
 %
where it is attributed to Lazard 
\cite[Prop.7.2]{Laz67}%
. %
A successful counter-example construction for $d = 2$, similar in spirit to the one in Section~\ref{sec:dis2} but using explicit polynomials was proposed by a computer assistant but only understood by the author after he had constructed his own version. The toric varieties construction for $d \geq 2$ is the author's, but the observation that you can track intersections using tropical varieties came from a computer assistant. Most of the technical toric work was done in collaboration with a computer assistant in a long conversation involving multiple prompts.\footnote{Prompts largely of the form ``find a reference for...'' or ``check the attached for errors''. } Chat history is available upon request.

\emph{Acknowledgement.} The author thanks Marc Hoyois, Christopher La Fond, Matthew Morrow, Shuji Saito, Georg Tamme for their interest in this question, Michael McBreen for stimulating conversations while the example was being written up, and Sebastian Szturo for many interesting conversations about artificial intelligence.

\section{A construction for $d = 2$} \label{sec:dis2}

\begin{prop}
There exists a ring $R$ with
\[ \Kdim R = 1, \qquad \textrm{ and } \qquad K_{-2}(R) \cong \ZZ. \]
\end{prop}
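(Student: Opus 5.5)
We realise the sketch from the introduction with $d=2$: a regular two-dimensional local-at-the-corners ring $R_\infty$, a boundary curve $\Spec R_\infty/I_\infty$ whose dual graph is an infinite cycle-like configuration with $H^1 \cong \ZZ$, and the Milnor square $R = R_\infty \times_{R_\infty/I_\infty} k$.

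\emph{Step 1 (the tower).} Start with $X_0 = \PP^2_k$ and the triangle $D_0 = \{xyz=0\}$, a simple normal crossings divisor whose dual graph is a $3$-cycle. Inductively let $X_{\lambda+1}\to X_\lambda$ be the blowup at all nodes of $D_\lambda$, and let $D_{\lambda+1}$ be the total transform. This is again sncd with dual graph a cycle, since each edge is subdivided by an exceptional $\PP^1$. Let $S_\lambda\subset X_\lambda$ be the finite set of nodes. Let $R_\lambda$ be the semilocal ring of $X_\lambda$ at $S_\lambda\cup S_{\lambda+1}$ images, or more simply of $X_\lambda$ at the image of the eventual set of corners. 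I would arrange the indexing so that $R_\lambda\to R_{\lambda+1}$ is a filtered system of regular semilocal rings (for instance, take affine neighbourhoods of $D_\lambda$ and pass to the colimit of coordinate rings before semilocalising). Set $I_\lambda$ to be the ideal of $D_\lambda$, and $R_\infty=\colim R_\lambda$, $I_\infty=\colim I_\lambda$. I would then choose $R_\infty$ as the semilocalisation of $\colim\Gamma(U_\lambda,\OO)$ at the set of all closed points lying on $D_\infty$, i.e.\ the complement of the union of all prime ideals not containing $I_\infty$ other than $(0)$.

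\emph{Step 2 (Krull dimension).} We need every prime of $R_\infty$ to either contain $I_\infty$ or be $(0)$. A height-one prime not containing $I$ corresponds, at each finite stage, to a curve $C$ not contained in $D_\lambda$. If $C$ meets $D_\lambda$ only at smooth points of $D_\lambda$, it disappears after semilocalising at the corners. If $C$ passes through a node, its strict transform eventually avoids all nodes after finitely many blowups, by the multiplicities dropping (embedded resolution of a curve germ against the sncd). Thus in the colimit $C$ is not contained in any prime at a corner. Similarly, the primes containing $I_\infty$ are the maximal ideals (the corner points, in the limit) and the generic points of the components. Then $\Spec R_\infty/I_\infty$ is one-dimensional, and $R=R_\infty\times_{R_\infty/I_\infty}k$ has spectrum consisting of $(0)$ and the image point. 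This makes $\Kdim R=1$.

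This step is where I expect the real difficulty. One must make "sliding off the corners" precise; I would phrase it via valuations or via the order of vanishing of the local equation $f$ of $C$ at the node. With local coordinates $u,v$ at a node where $D=\{uv=0\}$, blowing up the node and passing to the two new nodes strictly lowers a suitable Newton-polygon invariant of $f$ unless $f$ is a unit there. This needs care, and I may also need to blow up at infinitely many more nodes to choose $R_\infty$ correctly.

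\emph{Step 3 ($K$-theory).} $R_\infty$ is a filtered colimit of regular noetherian rings, so $K_{<0}(R_\infty)=0$ and $K_{<0}(k)=0$. Negative $K$-theory satisfies excision for Milnor squares (Bass / Weibel). The Mayer--Vietoris sequence then gives
\[ K_{-2}(R)\cong \coker\big(K_{-1}(R_\infty)\oplus K_{-1}(k)\to K_{-1}(R_\infty/I_\infty)\big)=K_{-1}(R_\infty/I_\infty). \]
By continuity, $K_{-1}(R_\infty/I_\infty)=\colim K_{-1}(R_\lambda/I_\lambda)$. Each $R_\lambda/I_\lambda$ is a one-dimensional noetherian ring, namely a semilocal cycle of rational curve germs. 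By \cite[Cor.D]{KST18}, $K_{-1}\cong H^1_{\cdh}(-,\ZZ)$. Computing via the closed cover by the (normal, hence cdh-$\ZZ$-acyclic) components gives the \v{C}ech complex of the cycle graph, so $H^1\cong\ZZ$. Finally, the transition maps for subdivision of a cycle are isomorphisms on $H^1$, so the colimit is $\ZZ$.
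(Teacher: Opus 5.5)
Your architecture is the paper's: the same tower of blow-ups at the nodes of the triangle, the same Milnor square, and the same reduction $K_{-2}(R)\cong K_{-1}(R_\infty/I_\infty)=\colim K_{-1}(R_\lambda/I_\lambda)$. Step~3 is fine. Computing $K_{-1}(R_\lambda/I_\lambda)$ via \cite[Cor.D]{KST18} and closed Mayer--Vietoris is what the paper does for $d\ge 2$. For $d=2$ the paper instead presents $\Spec R_n/I_n$ as a pushout of the branch germs along the nodes and uses Bass's computation. One small correction: the transition maps collapse the exceptional germs to nodes rather than subdivide, though they still induce isomorphisms on $H^1$. The gaps are in Steps~1 and~2.

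In Step~1 your final definition of $R_\infty$ is not usable, for three reasons.
\begin{enumerate}
\item $D_\lambda$ is a connected projective curve, so it has no affine neighbourhood.
\item Keeping all closed points of $D_\lambda$ would retain every curve meeting $D_\lambda$ at a smooth point. This contradicts your own Step~2, and gives chains $(0)\subset\p_C\subset\mathfrak m$ in $R$.
\item Localising at the complement of the union of the primes not containing $I_\infty$ keeps all of those primes. More generally, no localisation that keeps a corner $v$ can discard a curve through $v$, because that curve is a generisation of $v$.
\end{enumerate}
The right choice is the first one you mention, $R_\lambda=\bigcap_{v\in S_\lambda}\OO_{X_\lambda,v}$, and nothing needs arranging. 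Every node of $D_{\lambda+1}$ lies over a node of $D_\lambda$, so $R_\lambda\subseteq R_{\lambda+1}$ inside $k(X_0)$, and no extra blow-ups are required. Curves through the corners then disappear only in the limit, which is exactly what Step~2 must prove.

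Step~2 is asserted rather than proved, as you acknowledge. Citing embedded resolution is not directly applicable, since the centres are forced to be the nodes of $D_\lambda$ rather than chosen by a resolution algorithm. The paper first shows that blowing up the nodes eventually makes $Y_n$ smooth at every node it meets. It then uses Lemma~\ref{lemm:three-curves} to run the pair $(i(C,C'),i(C,C''))$ down until the strict transform misses the new nodes.

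Your Newton-polygon idea can be made to work directly. Let $C',C''$ be the branches at a node $P$ and set $\nu_P(C)=i_P(C,C')+i_P(C,C'')$. Blow up $P$, and let $F$ be the exceptional curve and $m=\mathrm{mult}_P(C)$.
\begin{itemize}
\item The only new nodes over $P$ are $Q'=F\cap\widetilde C'$ and $Q''=F\cap\widetilde C''$.
\item Noether's formula $i_P(C,C'')=\sum_{Q\in F}i_Q(\widetilde C,\widetilde C'')+m$, together with $\widetilde C''\cap F=\{Q''\}$, gives $i_{Q''}(\widetilde C,\widetilde C'')=i_P(C,C'')-m$.
\item Also $i_{Q''}(\widetilde C,F)\le\widetilde C\cdot F=m$.
\end{itemize}
Hence $\nu_{Q''}(\widetilde C)\le i_P(C,C'')<\nu_P(C)$, and symmetrically at $Q'$. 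Moreover $\nu\ge 2$ at every node on a curve not contained in the boundary, and each node has at most two successors. So after finitely many stages $\widetilde C$ meets no node. Until an argument of this kind is supplied, $\Kdim R=1$ is not established.
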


\begin{proof}
Let $X_0$ be a smooth surface over a field $k$ and  $E_0 = E_{0,0} \cup E_{0,1} \cup E_{0,2}$ a snc divisor arranged in a triangle. Let $V_0 = \{ E_{0,0} \cap E_{0,1}, E_{0,0} \cap E_{0,2}, E_{0,1} \cap E_{0,2} \}$ be the set of intersections of branches and for simplicity suppose all $v \in V_0$ have $k = k(v)$. For example we could take 
\begin{align*}
X_0 &= \PP^2 = \Proj k[x, y, z], \\
E_0 &= \Proj \tfrac{k[x, y, z]}{xyz} = E_{0,x} \cup E_{0,y} \cup E_{0,z}, \\
V_0 &= \{v_{0,xy}, v_{0,xz}, v_{0,yz}\}.
\end{align*}
For $n > 0$, inductively perform the blowup of $X_n$ in $V_n$, set $E_{n+1}$ to be the reduced pullback of $E_n$ and let $V_{n+1}$ be the new set of intersections.
\begin{align*}
X_{n} &= \Bl_{V_{n{-}1}} (X_{n{-}1}) \xra{π_n} X_{n-1}, \\
E_{n} &= (π^{-1}(E_{n-1}))_{red} = \bigcup_{i \in I_{n}} E_{n, i}, \\
V_{n} &= \bigcup_{i,j \in I} E_{n,i} \cap E_{n,j}.
\end{align*}

\begin{equation} \label{equa:corner-blowups}
\begin{tikzpicture}%
[line width=.6pt,line cap=round,
scale=.9,transform shape]
\def\radius{1.15}
\def\firstcut{.26}
\def\secondcut{.16}
\def\overhang{.20}

\begin{scope}[shift={(8,3)}]
  \coordinate (e0a) at (90:\radius);
  \coordinate (e0b) at (210:\radius);
  \coordinate (e0c) at (330:\radius);
\end{scope}

\begin{scope}[shift={(4,3)}]
  \coordinate (A) at (90:\radius);
  \coordinate (B) at (210:\radius);
  \coordinate (C) at (330:\radius);

  \coordinate (e1q0) at ($(A)!\firstcut!(C)$);
  \coordinate (e1q1) at ($(A)!\firstcut!(B)$);
  \coordinate (e1q2) at ($(B)!\firstcut!(A)$);
  \coordinate (e1q3) at ($(B)!\firstcut!(C)$);
  \coordinate (e1q4) at ($(C)!\firstcut!(B)$);
  \coordinate (e1q5) at ($(C)!\firstcut!(A)$);
\end{scope}

\begin{scope}[shift={(0,3)}]
  \coordinate (A) at (90:\radius);
  \coordinate (B) at (210:\radius);
  \coordinate (C) at (330:\radius);

  \coordinate (q0) at ($(A)!\firstcut!(C)$);
  \coordinate (q1) at ($(A)!\firstcut!(B)$);
  \coordinate (q2) at ($(B)!\firstcut!(A)$);
  \coordinate (q3) at ($(B)!\firstcut!(C)$);
  \coordinate (q4) at ($(C)!\firstcut!(B)$);
  \coordinate (q5) at ($(C)!\firstcut!(A)$);

  \coordinate (e2r0)  at ($(q0)!\secondcut!(q5)$);
  \coordinate (e2r1)  at ($(q0)!\secondcut!(q1)$);
  \coordinate (e2r2)  at ($(q1)!\secondcut!(q0)$);
  \coordinate (e2r3)  at ($(q1)!\secondcut!(q2)$);
  \coordinate (e2r4)  at ($(q2)!\secondcut!(q1)$);
  \coordinate (e2r5)  at ($(q2)!\secondcut!(q3)$);
  \coordinate (e2r6)  at ($(q3)!\secondcut!(q2)$);
  \coordinate (e2r7)  at ($(q3)!\secondcut!(q4)$);
  \coordinate (e2r8)  at ($(q4)!\secondcut!(q3)$);
  \coordinate (e2r9)  at ($(q4)!\secondcut!(q5)$);
  \coordinate (e2r10) at ($(q5)!\secondcut!(q4)$);
  \coordinate (e2r11) at ($(q5)!\secondcut!(q0)$);
\end{scope}

\foreach \u/\v in {e0a/e0b,e0b/e0c,e0c/e0a}
  \draw ($(\u)!-\overhang!(\v)$)--($(\v)!-\overhang!(\u)$);

\foreach \i/\j in {0/1,1/2,2/3,3/4,4/5,5/0}
  \draw ($(e1q\i)!-\overhang!(e1q\j)$)--
        ($(e1q\j)!-\overhang!(e1q\i)$);

\foreach \i/\j in {
  0/1,1/2,2/3,3/4,4/5,5/6,
  6/7,7/8,8/9,9/10,10/11,11/0}
  \draw ($(e2r\i)!-\overhang!(e2r\j)$)--
        ($(e2r\j)!-\overhang!(e2r\i)$);

\foreach \p in {e0a,e0b,e0c}{
  \fill (\p) circle (1.4pt);
  \fill ($(\p)+(0,-3)$) circle (1.4pt);
}

\foreach \p in {e1q0,e1q1,e1q2,e1q3,e1q4,e1q5}{
  \fill (\p) circle (1.4pt);
  \fill ($(\p)+(0,-3)$) circle (1.4pt);
}

\foreach \p in {
  e2r0,e2r1,e2r2,e2r3,e2r4,e2r5,
  e2r6,e2r7,e2r8,e2r9,e2r10,e2r11}{
  \fill (\p) circle (1.4pt);
  \fill ($(\p)+(0,-3)$) circle (1.4pt);
}

\node at (0,2) {$E_2$};
\node at (4,2) {$E_1$};
\node at (8,2) {$E_0$};

\node at (0,-1) {$V_2$};
\node at (4,-1) {$V_1$};
\node at (8,-1) {$V_0$};
\end{tikzpicture}
\end{equation}

For $n \geq 0$ let $R_n$ be the semilocal ring at $V_n \subseteq X_n$ and let $I_n \subseteq R_n$ be the ideal corresponding to $E_n \cap \Spec R_n$.
\[ R_n = \bigcap_{v \in V_n} \OO_{X_n, v} \subseteq k(X_n), \qquad V(I_n) = E_n \cap \Spec R_n. \]
In particular, we have sequences of ring homomorphisms
\[ R_0 \to R_1 \to R_2 \to \dots \to R_∞ = \colim R_n \]
\[ R_0/I_0 \to R_1/I_1 \to R_2 / I_2 \to \dots \to R_∞/I_∞ = \colim R_n/I_n \]
Set
\[ R = R_∞ \times_{R_∞/I∞} k. \]

\emph{The Krull dimension of $R$ is one.} Since the topological space of the pushout scheme $\Spec R_∞ \sqcup_{\Spec (R_∞ / I_∞)} \Spec k$ is the pushout topological space, it suffices to show that every nongeneric point $y$ of $\Spec R_∞$ lies in $\Spec R_∞ / I_∞$. Equivalently, we want $y_n \in \Spec R_n / I_n$ for all $n$ where $y_n$ is the image of $y$. %
Considering the canonical cartesian square of inclusions
\[ \xymatrix{
\Spec R_n/I_n \ar[r]_-{\subseteq} \ar[d]^{\cap|} & E_n \ar[d]^{\cap|} \\
\Spec R_n \ar[r]_{\subseteq} & X_n
} \] 
one more way to say this is that the irreducible subvariety $Y_n = \overline{\{y_n\}} \subseteq X_n$ is contained in $E_n$ for all $n$. Note that since $y_n \in \Spec R_n = \bigcup_{v \in V_n} \Spec \OO_{X_n, v}$, we necessarily have $Y_n \cap V_n \neq \varnothing$.

Suppose not. Then we have a sequence of proper irreducible subvarieties $\dots \to Y_2 \to Y_1 \to Y_0$ such that $Y_n \cap V_n \neq \varnothing$ and $Y_n \not \subseteq E_n$ for all $n$. In particular, each $Y_n$ is an irreducible curve. Since we are successively blowing up $V_n$, for some $n_0$ the curves $Y_n$ are smooth at $V_n$ for all $n \geq n_0$, \cite[0BI4, 0BXQ]{stacks-project}%
. Then we are in the situation of Lemma~\ref{lemm:three-curves}. In particular, continuing to blow up a finite number of times we eventually have $Y_N \cap V_N = \varnothing$, contradicting the assumption $y_N \in \Spec R_N$.

\emph{The $K$-group $K_{-2}(R)$ is $\ZZ$.} 
From the exact sequence 
\[ \underbrace{K_{-1}(R_∞) \oplus K_{-1}(k)}_{=0} \to K_{-1}(R_∞/I_∞) \to K_{-2}(R) \to \underbrace{K_{-2}(R_∞) \oplus K_{-2}(k)}_{=0} \]
it suffices to show that $K_{-1}(R_∞/I_∞) \cong \ZZ$. Here, the vanishing comes from regularity. Namely, $R_∞$ is a filtered colimit of a sequence of semilocalisations of smooth varieties $X_n$ and  regular rings have no negative $K$-theory. For the claim about $K_{-1}(R_∞/I_∞)$ notice that since the branches of $E_n$ all intersect transversally, $\Spec R_n/I_n$ can be constructed as the pushout
\[ \xymatrix{
\underset{{i \in \ZZ/3\cdot 2^n}}{\bigsqcup} 
(C_{n,i} \cap V_n) \ar[d] \ar[r] & 
\underset{{i \in \ZZ/3\cdot 2^n}}{\bigsqcup} 
C_{n,i} \ar[d] \\
\underset{{i \in \ZZ/3\cdot 2^n}}{\bigsqcup} \Spec k \ar[r] & \Spec R_n / I_n
} \]
where $C_{n,i} = E_{n,i} \cap \Spec R_n/I_n$. 
\[
\begin{tikzpicture}[>=Stealth]

  \node[minimum width=2.6cm,minimum height=2.8cm] (B) at (4.5,3.3) {};
  \node[minimum width=2.6cm,minimum height=2.8cm] (C) at (0,0) {};
  \node[minimum width=2.6cm,minimum height=2.8cm] (D) at (4.5,0) {};

\node[minimum width=2.6cm,minimum height=2.8cm] (A) at (0,3.3) {};

\begin{scope}[shift={(A.center)}]
  \foreach \k in {0,...,5}
    \coordinate (a\k) at ({cos(60*\k)},{sin(60*\k)});

  \foreach \k in {0,...,5}{
    \pgfmathtruncatemacro{\j}{mod(\k+1,6)}
    \fill ($(a\k)!.13!(a\j)$) circle (1.5pt);
    \fill ($(a\k)!.87!(a\j)$) circle (1.5pt);
  }

\end{scope}

  \begin{scope}[shift={(B.center)}]
    \foreach \k in {0,...,5}
      \coordinate (b\k) at ({cos(60*\k)},{sin(60*\k)});

    \foreach \k in {0,...,5}{
      \pgfmathtruncatemacro{\j}{mod(\k+1,6)}
      \draw ($(b\k)!.13!(b\j)$) -- ($(b\k)!.87!(b\j)$);
      \fill ($(b\k)!.13!(b\j)$) circle (1.2pt);
      \fill ($(b\k)!.87!(b\j)$) circle (1.2pt);
    }

  \end{scope}

  \begin{scope}[shift={(C.center)}]
    \foreach \k in {0,...,5}
      \fill ({cos(60*\k)},{sin(60*\k)}) circle (1.5pt);
  \end{scope}

  \begin{scope}[shift={(D.center)}]
    \draw plot[domain=0:360,samples=7]
      ({cos(\x)},{sin(\x)});
  \end{scope}

  \draw[->] (A.east)  -- (B.west);
  \draw[->] (A.south) -- (C.north);
  \draw[->] (B.south) -- (D.north);
  \draw[->] (C.east)  -- (D.west);
\end{tikzpicture}
\]
Since $k$ and the $C_{n,i}$ are regular (the latter are semilocalisations of the smooth curves $E_{n,i}$) they have no negative $K$-theory so 
\begin{align*}
&K_{-1}(R_n/I_n) \\
&= \coker\left ( \left ( \bigoplus_{i \in \ZZ/3 {\cdot} 2^n} K_0(k) \right ) \oplus \left ( \bigoplus_{i \in \ZZ/3 {\cdot} 2^n} K_0(C_{n,i}) \right ) \to \bigoplus_{i \in \ZZ/3 {\cdot} 2^n} K_0(C_{n,i} \cap V_n) \right )
\end{align*}
Since the $C_{n,i}$ are (semilocalisations of) smooth irreducible curves, we have $K_0 = \ZZ \oplus \Pic$. The $\Pic$ part is killed in the $K_0$ of the points $v \in V_n$ so it doesn't contribute to the cokernel and we get 
\[ K_{-1}(R_n/I_n) = \coker\left ( \left ( \bigoplus_{i \in \ZZ/3 {\cdot} 2^n} \ZZ  \right ) \oplus \left ( \bigoplus_{i \in \ZZ/3 {\cdot} 2^n} \ZZ \right ) \to \bigoplus_{i \in \ZZ/3 {\cdot} 2^n} (\ZZ \oplus \ZZ)  \right ) \]
The combinatorics of this cokernel are the same combinatorics calculating $H^1$ of a $3 {\cdot} 2^n$-gon, \cite[XII.10.4(c)]{Bas68}%
. Hence, $\ZZ$. Moreover, the transition morphisms 
\[ \Spec R_{n+1}/I_{n+1} \to \Spec R_n / I_n \]
correspond to collapsing each alternative edge to a point. Hence, this polygon description shows that they induce isomorphisms
\[ K_{-1}(R_{n+1}/I_{n+1}) \stackrel{\sim}{\leftarrow} K_{-1}(R_{n}/I_{n}) \]
Taking the colimit gives
\[ K_{-2}(R) = K_{-1}(R_∞/I_∞) = \colim K_{-1}(R_n/I_n) = \colim H^1( 3 {\cdot} 2^n \textrm{-gon}) = \colim \ZZ = \ZZ. \]

\end{proof}

Write $i_P(C, C') = \sum_{n \in \NN} \length \Tor_n^{\OO_{X, P}}(\OO_{C, P}, \OO_{C',P})$ for the intersection number of two curves $C, C'$ in a surface $X$ intersecting properly at a point $P$.

\begin{lemm} \label{lemm:three-curves}
Suppose that $C, C', C''$ are three irreducible curves in a smooth surface $X$ over a field $k$, such that $C' \cup C''$ is a snc divisor, $C \cap C' \cap C''$ contains a rational point $P$, and $C, C', C''$ are all smooth at $P$. Let $π: \Bl_PX = \widetilde{X} \to X$ be the blowup at $P$, let $\widetilde{C}, \widetilde{C}', \widetilde{C}''$ be the strict transforms, and $D = π^{-1}(P)$ the exceptional divisor.
\begin{enumerate}
 \item If $i_P(C, C') = i_P(C, C'')$ then 
\begin{align*}
\widetilde{C} \cap D \cap \widetilde{C}' &= \varnothing, \\
\widetilde{C} \cap D \cap \widetilde{C}'' &= \varnothing.
\end{align*}

\[
\begin{tikzpicture}[
  line width=.7pt,
  line cap=round,
  >={Stealth[length=5pt]},
  every node/.style={font=\small}
]


\begin{scope}[shift={(0,4.4)}]
  \draw (0,0)--(0,1.55) node[above] {$C'$};
  \draw (0,0)--(1.55,0) node[right] {$C''$};
  \draw (0,0)--(-1.15,1.15)
    node[below left] {$C$};

  \fill (0,0) circle (1.5pt)
    node[below right] {$P$};
\end{scope}

\begin{scope}[shift={(6,4.4)}]
  \draw (-1.25,-.75)--(1.25,.75)
    node at (0.5,-0.25) {$D$};

  \draw (-.60,-.36)--(-1.35,.39)
    node[left] {$\widetilde C$};
  \draw (0,0)--(0,1.30)
    node[above] {$\widetilde C'$};
  \draw (.60,.36)--(1.45,.36)
    node[right] {$\widetilde C''$};

  \fill (-.60,-.36) circle (1.5pt);
  \fill (0,0) circle (1.5pt);
  \fill (.60,.36) circle (1.5pt);
\end{scope}
\end{tikzpicture}
\]

 \item If $i_P(C, C') > i_P(C, C'')$ then 
\begin{align*}
i_{D\cap\widetilde C'}(\widetilde C,D)
&= 
i_P(C,C'') \\
i_{D\cap\widetilde C'}(\widetilde C,\widetilde C')
&= 
i_P(C,C')-i_P(C,C'') \\
\widetilde C\cap D\cap\widetilde C'' &= \varnothing.
\end{align*}
\[
\begin{tikzpicture}[
  line width=.7pt,
  line cap=round,
  >={Stealth[length=5pt]},
  every node/.style={font=\small}
]


\begin{scope}[shift={(0,.5)}]
  \draw (0,0)--(0,1.55) node[above] {$C'$};
  \draw (0,0)--(1.55,0) node[right] {$C''$};

  \draw (-1.35,1.20)
    .. controls (-.65,1.15) and (0,.75) ..
    (0,0);
  \node at (-1.48,1.20) {$C$};

  \fill (0,0) circle (1.5pt) node[below right] {$P$};
\end{scope}


\begin{scope}[shift={(6,.5)}]
  \draw (-1.25,-.75)--(1.25,.75)
    node at (0.5,-0.25) {$D$};

  \draw (0,0)--(-1.25,.75)
    node[left] {$\widetilde C$};
  \draw (0,0)--(0,1.30)
    node[above] {$\widetilde C'$};
  \draw (.60,.36)--(1.45,.36)
    node[right] {$\widetilde C''$};

  \fill (0,0) circle (1.5pt);
  \fill (.60,.36) circle (1.5pt);
\end{scope}
\end{tikzpicture}
\]
\end{enumerate}
\end{lemm}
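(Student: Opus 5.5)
The plan is to work entirely in the complete local ring at $P$ and reduce to an explicit blowup chart computation. Since $C'\cup C''$ is snc at the rational point $P$ and $X$ is smooth, we may choose local parameters $x,y$ of $\OO_{X,P}$ with $C' = V(x)$ and $C''=V(y)$. Intersection numbers of curves meeting properly at $P$ and the local geometry of the blowup are unchanged by passing to $\widehat{\OO}_{X,P}\cong k[[x,y]]$; the blowup commutes with this flat base change. We can therefore compute with power series.

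Since $C$ is smooth at $P$ and irreducible, and meets $C'$ and $C''$ properly, its local equation $f$ has a nonzero linear term. Set $a = i_P(C,C') = \operatorname{ord}_y f(0,y)$ and $b = i_P(C,C'') = \operatorname{ord}_x f(x,0)$. For complete intersections in a regular local ring the higher $\Tor$ vanish, so the $\Tor$-sum is just $\length \OO/(f,x)$, which justifies these formulas. Smoothness forces $\min(a,b)=1$.

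\textbf{Case 1: $a=b$.} Then $a=b=1$, so $f = \alpha x + \beta y + (\text{higher})$ with $\alpha\beta\neq 0$. The tangent direction $[\alpha:\beta]$ of $C$ at $P$ is distinct from the tangent directions $[1:0]$ and $[0:1]$ of $C'$ and $C''$. Since $C$ is smooth at $P$, $\widetilde C$ meets $D\cong\PP^1$ in the single point given by its tangent direction. Likewise $\widetilde C'$ and $\widetilde C''$ meet $D$ at their own tangent directions. These three points are distinct, which gives both emptiness claims.

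\textbf{Case 2: $a>b$.} Then $b=1$, and $f = \alpha x + (\text{terms in } y \text{ of order} \geq 2) + xy(\dots)$. More precisely, after Weierstrass preparation (or the implicit function theorem in $k[[x,y]]$, valid since $\partial f/\partial x(0)=\alpha\neq 0$), we can write $f = u\cdot(x - g(y))$ with $u$ a unit and $\operatorname{ord} g = a\geq 2$. Then $C$ is tangent to $C'=V(x)$. So $\widetilde C$ passes through the point $D\cap\widetilde C'$ and misses $D\cap \widetilde C''$, which gives the third claim.

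To get the numbers, use the chart $x = x_1 y$ with coordinates $(x_1,y)$. Here $D = V(y)$ and $\widetilde C' = V(x_1)$, and the point $D\cap\widetilde C'$ is the origin. The strict transform is $\widetilde C = V(x_1 - g(y)/y)$, with $\operatorname{ord}(g/y) = a-1$. Then:
\begin{align*}
i(\widetilde C, D) &= \length k[[x_1,y]]/(x_1 - g/y,\, y) = 1 = b,\\
i(\widetilde C,\widetilde C') &= \length k[[y]]/(g/y) = a-1 = a-b.
\end{align*}

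I expect no real obstacle. The only points needing care are the reduction to the completion (flatness of $\OO_{X,P}\to\widehat{\OO}_{X,P}$ and compatibility of lengths and blowups), and the observation that smoothness of $C$ forces $\min(a,b)=1$, which is what makes the stated formulas hold with $i_P(C,C'')=1$.
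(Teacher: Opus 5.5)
Your proof is correct, and it is organised differently from the paper's.

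\textbf{The paper's route.} The paper never writes down an equation for $C$. It uses $\widehat{\OO}_{C,P}\cong k[[t]]$ to parametrise $C$ as an arc, writing $x'=u't^{i(C,C')}$ and $x''=u''t^{i(C,C'')}$. It then lifts the arc $\Spec k[[t]]\to\Spec k[[x',x'']]$ to the blowup charts. Every claim is read off as a $t$-order of the pulled-back chart coordinates:
\begin{enumerate}
\item[(a)] in case 1 these coordinates are units;
\item[(b)] in case 2 they are $t^{i'-i''}$ and $t^{i''}$;
\item[(c)] the arc does not factor through the chart $\Spec k[[x',x'']][x''/x']$, which shows that $\widetilde C$ misses $D\cap\widetilde C''$.
\end{enumerate}

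\textbf{Your route.} You work with an implicit equation instead. You put it in the normal form $u\cdot(x-g(y))$ by Weierstrass preparation and compute lengths of quotient rings in the chart $x=x_1y$. For the emptiness claims you use the standard description of $\widetilde C\cap D$ as the single point given by the tangent direction of $C$.

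\textbf{What each buys.} The substantive extra ingredient in your argument is the observation that smoothness of $C$ at $P$ forces $\min(i_P(C,C'),i_P(C,C''))=1$. This shows two things that the paper's uniform formulas with general exponents leave hidden:
\begin{enumerate}
\item[(a)] case 1 is exactly the case where $C$ is transverse to both branches;
\item[(b)] the first formula of case 2 always reads $i(\widetilde C,D)=1$.
\end{enumerate}
Conversely, the paper's arc computation needs no Weierstrass preparation. It locates the unique point of $\widetilde C$ over $P$ directly as the closed point of the lifted arc, so both cases, with arbitrary exponents, come out of one valuation calculation.

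\textbf{Minor points.} Neither affects correctness.
\begin{enumerate}
\item[(a)] Your intermediate expansion of $f$ as $\alpha x+(\text{terms in }y)+xy(\dots)$ omits possible $x^2$ terms. The Weierstrass normal form you then use is correct.
\item[(b)] $[\alpha:\beta]$ is really the conormal direction rather than the tangent direction. This is harmless because you use the same convention for $C'$ and $C''$.
\end{enumerate}
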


\begin{proof}
Note that in a smooth surface all curves are lci, so the intersection multiplicity is the length of the local ring. That is, only $\Tor_0$ is nonzero. Since $C$ is smooth at $P$, we have $\widehat{\OO}_{C, P} \cong k[[t]]$ and  the intersection multiplicities are therefore obtained as 
\[ x' = u't^{i(C, C')} \qquad \textrm{ and } \qquad x'' = u''t^{i(C, C'')} \]
where $u', u'' \in k[[t]]^*$ and $x', x'' \in \widehat{\OO}_{X, P}$ are local coordinates defining $C', C''$ at $P$. 

Blowing up, we get charts with coordinates $x', \tfrac{x''}{x'}$ and $\tfrac{x'}{x''}, x''$ respectively. On these new charts, locally
near $\widetilde{C}' \cap D$ and $\widetilde{C}'' \cap D$ respectively, the curves 
 $\widetilde{C}', \widetilde{C}'', D$ are defined by $1$, $\tfrac{x''}{x'}$, $x'$ and $\tfrac{x'}{x''}, 1, x''$ respectively. In particular, if $i(C, C') = i(C, C'')$ then $\Spec k[[t]] \to \Spec k[[x', x'']]$ factors through both charts, and pulling the coordinates of $\widetilde{C}', \widetilde{C}''$ back to $k[[t]]$ gives units $1, \tfrac{u''}{u'}$ and $\tfrac{u'}{u''}, 1$ on both charts. 

Suppose $i(C, C') > i(C, C'')$. Since $t^{i(C,C'') - i(C,C')} \notin k[[t]]$ the scheme morphism $\Spec k[[t]] \to \Spec k[[x', x'']]$ does not factor through the chart $\Spec k[[x', x'']][\tfrac{x''}{x'}]$. On the chart $\Spec k[[x', x'']][\tfrac{x'}{x''}]$ the local equations for $\widetilde{C}'$ and $D$ are 
$\tfrac{x'}{x''} = \tfrac{u'}{u''} t^{i(C,C') - i(C,C'')}$
 and 
$x'' = u'' t^{i(C,C'')}$
so 
$i(\widetilde{C}, \widetilde{C}') = i(C,C') - i(C,C'')$ 
and 
$i(\widetilde{C}, D) = i(C,C'')$.
\end{proof}

\section{A construction for $d \geq 2$}

\begin{prop}
Suppose $d \geq 2$. Then there exists a ring $R$ with
\[ \Kdim R = 1, \qquad \textrm{ and } \qquad K_{-d}(R) \neq 0. \]
\end{prop}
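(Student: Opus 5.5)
We generalise the $d=2$ construction using toric geometry. Start with $X_0 = \PP^d$ over $k$ with its toric boundary $E_0 = V(x_0 x_1 \cdots x_d)$. This is an snc divisor whose dual complex is the boundary of a $d$-simplex, i.e.\ a triangulated $S^{d-1}$. Inductively let $X_{n+1} \to X_n$ be the composite of toric blowups of all strata of $E_n$ (all face intersections, in increasing order of dimension, as in a barycentric subdivision of the fan). Let $E_{n+1}$ be the reduced total transform and $V_{n+1}$ the set of torus fixed points ($0$-dimensional strata). Each $X_n$ is a smooth projective toric variety, because star subdivisions of smooth fans are smooth. Each $E_n$ is its snc toric boundary, whose dual complex is an iterated subdivision of $\partial\Delta^d$, hence still homeomorphic to $S^{d-1}$. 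As in Section~\ref{sec:dis2}, let $R_n$ be the semilocal ring of $X_n$ at $V_n$, let $I_n$ be the ideal of $E_n$, and set $R_\infty = \colim R_n$ and $R = R_\infty \times_{R_\infty/I_\infty} k$.

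\emph{$K$-theory.} $R_\infty$ is a filtered colimit of regular rings, so it has no negative $K$-theory, and neither does $k$. The Milnor square therefore gives $K_{-d}(R) \cong K_{1-d}(R_\infty/I_\infty) = \colim K_{1-d}(R_n/I_n)$. Each $\Spec R_n/I_n$ is a semilocalised snc configuration of dimension $d-1$. By \cite[Cor.D]{KST18}, $K_{1-d}(R_n/I_n) \cong H^{d-1}_{\cdh}(\Spec R_n/I_n,\ZZ)$. Since the components and all their intersections are semilocalisations of smooth toric strata, and their $\cdh$ cohomology with $\ZZ$ coefficients vanishes in positive degrees, descent for the closed cover by components computes this group as the \v{C}ech cohomology of the nerve. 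We must check that every stratum, after semilocalisation, still meets $V_n$ and is connected; this holds because each face of the subdivided sphere contains a vertex of the subdivision. The nerve is the dual complex, which is homeomorphic to $S^{d-1}$, so we get $\ZZ$. The transition maps are induced by subdivision, which is a homotopy equivalence on dual complexes. This gives compatible isomorphisms, so the colimit is $\ZZ \neq 0$.

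\emph{Krull dimension.} As in Section~\ref{sec:dis2}, it suffices to show that every nongeneric point $y$ of $\Spec R_\infty$ lies in $\Spec R_\infty/I_\infty$. Suppose not, and let $Y_n = \overline{\{y_n\}} \subseteq X_n$. These are irreducible, satisfy $Y_n \cap V_n \neq \varnothing$ and $Y_n \not\subseteq E_n$, and are compatible under strict transform. Because $Y_n \not\subseteq E_n$, $Y_n$ meets the open torus $T$, so $Y_n$ is the closure of a subvariety $Z \subseteq T$ of positive dimension. The limit points of $Z$ in the toric boundary are governed by the tropical variety $\trop(Z) \subseteq N_\RR$. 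By Tevelev's theorem, the closure of $Z$ in the toric variety of a fan $\Sigma$ meets the orbit $O(\sigma)$ if and only if $\trop(Z) \cap \relint\sigma \neq \varnothing$. Since $\dim Z \geq 1$, $\trop(Z)$ is a nonzero rational polyhedral fan of dimension $\dim Z$, after passing to a constant-coefficient extension if needed. Meeting the fixed point $V_\sigma$ requires $\trop(Z)$ to meet the interior of a maximal cone $\sigma$. Under iterated barycentric-type subdivision, the maximal cones shrink: the projectivised maximal cones of $\Sigma_n$ have diameter tending to $0$ on the sphere. A positive-dimensional polyhedral fan $\trop(Z)$ is contained in finitely many rational hyperplanes of dimension $<d$. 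The main obstacle is to show that for $n \gg 0$ no maximal cone of $\Sigma_n$ has interior meeting $\trop(Z)$. This can fail for naive subdivision, since a rational ray could lie inside maximal cones forever. We will therefore choose the subdivision so that every rational ray eventually becomes a ray of the fan, or lies on walls: star subdividing all cones at every stage makes each lattice primitive vector eventually lie on lower-dimensional cones, by a continued-fraction or Euclidean-algorithm argument generalising Lemma~\ref{lemm:three-curves}.

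Concretely, we will prove by induction that for any rational cone $\tau$ and any stage $n$, after finitely many further stages $\tau$ is a union of non-maximal cones of the fan. Then $\trop(Z)$, a finite union of rational cones of dimension at most $d-1$, avoids the interiors of all maximal cones, so $Y_N \cap V_N = \varnothing$, a contradiction. This step, controlling the asymptotics of the subdivision against a fixed rational fan, is the one I expect to require the most care. If full stellar subdivision does not suffice, I would instead enumerate the countably many rational lower-dimensional cones and at stage $n$ additionally refine the fan so that the $n$-th one becomes a union of cones. This is possible by toric resolution and preserves smoothness, snc and the sphere-type dual complex, so the $K$-theory calculation above is unaffected.
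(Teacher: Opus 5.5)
Your $K$-theory half is fine and matches the paper: identifying the nerve of the component cover with a triangulated $S^{d-1}$ is the same computation as the paper's cokernel over the dual graph of maximal cones.

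\textbf{The gap.} It is in the Krull-dimension half of your primary construction. For $d\geq 3$, iterated barycentric subdivision does \emph{not} eventually make every rational cone of dimension $<d$ a union of non-maximal cones. Nor do the projectivised maximal cones shrink to points.

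Your Euclidean-algorithm argument only works for rays. If a lattice vector has coordinates $x_1,\dots,x_d>0$ in a smooth maximal cone, then in the child cone containing it in its interior its coordinates are the consecutive differences of the sorted $x_i$. Their sum is $\max_i x_i<\sum_i x_i$, so the vector reaches a wall after finitely many steps.

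For hyperplanes this fails. Take $d=3$, the cone $\langle e_1,e_2,e_3\rangle$ of $\PP^3$, and the subtorus $Z=\{x_1=x_2x_3^2\}\subseteq T$. Then $\trop(Z)=\ell^\perp$ with $\ell=-x_1+x_2+2x_3$.
\begin{itemize}
\item If $\ell$ takes the values $(-1,1,2)$ on the generators $g_1,g_2,g_3$ of a maximal cone, it takes the same values on the generators $g_1,\ g_1+g_3,\ g_1+g_2+g_3$ of one of its barycentric children. Equivalently, in the toric chart $\mathbb{A}^3$ of that child the closure of $Z$ is again $\{u_1=u_2u_3^2\}$.
\item So at every stage there is a maximal cone $\sigma_n$ with $3g_1+g_2+g_3\in\ell^\perp\cap\relint\sigma_n$. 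These cones are nested, all contain $e_1$, and $\bigcap_n\sigma_n$ is not a ray, so the mesh claim fails too.
\item By Tevelev's criterion, the closure of $Z$ in every $X_n$ contains a fixed point. The generic point of $Z$ therefore gives a prime $\mathfrak{p}$ of $R_\infty$ with $I_\infty\not\subseteq\mathfrak{p}$ and $\mathfrak{p}+I_\infty\neq R_\infty$.
\end{itemize}
Hence $\Kdim R\geq 2$ for the barycentric construction.

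\textbf{The fallback.} Your fallback is therefore not optional, but it does work. $\trop(Z)$ is the support of a finite rational fan of dimension $<d$. Once all of its cones have appeared in your enumeration and been refined, it lies in the $(d-1)$-skeleton. Further subdivision preserves this, and Tevelev's criterion then gives $Y_N\cap V_N=\varnothing$.

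With that change your proof is essentially the paper's. The paper avoids choosing a sequence at all by taking the colimit over the whole filtered poset $\Fan^{sm,proj}$. For a given $Y_0$ it simply picks a smooth projective refinement of both $\Sigma_0$ and the fan of $\trop(Y_0\cap T)$, so no asymptotic analysis of a fixed subdivision scheme is needed. You should lead with the enumeration (or the filtered colimit) and drop the barycentric sequence and the mesh claim.
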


\begin{proof}
Let $X_0$ be a toric $d$-fold such as $X_0 = \PP^d$ with 
cocharacter lattice $N = \hom(\GG_m, T)$ and fan $Σ_0$. 
Consider the poset $\Fan$ of finite completerational fans and its subposet $\Fan^{sm,proj}$ of fans $\Sigma$ such that $X_\Sigma$ is smooth and projective. The poset relation is subdivision. %
Note that $\Fan$ is filtered and $\Fan^{sm,proj}$ is cofinal. Indeed,
\begin{enumerate}
 \item any two finite complete rational fans $\Delta, \Delta'$ admit a common subdivision,%
\footnote{For example take $\{σ \cap σ' \mid σ \in \Delta, σ' \in \Delta'\}$} %
and
 \item any complete fan $\Sigma$ admits a subdivision $\Sigma'$ such that $X_{\Sigma'}$ is nonsingular and projective, \cite[Thm.6.1.18, Thm.11.1.9]{CLS11}.
\end{enumerate}
For each $Σ \in \Fan^{sm,proj}$ let
\[ 
R_Σ = \bigcap_{x \in X_Σ^T} \OO_{X_Σ, x} \subseteq k(X_Σ)
\] 
be the semilocalisation of $X_Σ$ at the fixed points $X_Σ^T$ and $I_Σ \subseteq R_Σ$ the ideal in the cartesian square
\[ \xymatrix{
\Spec R_Σ/I_Σ \ar[r] \ar[d] & \partial X_Σ = X_Σ \setminus T \ar[d] \\
\Spec R_Σ \ar[r] & X_Σ 
} \]
where $\partial X_Σ = X_Σ \setminus T$ is the closed complement of the dense orbit $T \subseteq X_Σ$. Note that $\Spec R_\Sigma \to X_\Sigma$ is pro-open, thanks to our hypothesis that $X_\Sigma$ is projective.%
\footnote{Choose $X_\Sigma \subseteq \PP^N$ for some $N$ and a hyperplane missing the finitely many fixed points of $X_\Sigma$.} %

Using the resulting filtered system we define
\begin{align*}
R_∞ &= \colim R_Σ \\
I_∞ &= \colim I_Σ \\
R &= R_∞ \times_{R_∞ / I_∞} k
\end{align*}
It remains to show that $\Kdim R = 1$ and $K_{-d}(R) \neq 0$.

\emph{The Krull dimension of $R$ is one.} Since the topological space of the pushout scheme $\Spec R_∞ \sqcup_{\Spec (R_∞ / I_∞)} \Spec k$ is the pushout topological space, it suffices to show that every nongeneric point $y \in  \Spec R_∞$ has $y \in \Spec R_∞ / I_∞$. Equivalently, we want $y_Σ \in \partial X_Σ = X_Σ \setminus T$ for all $Σ$ where $y_Σ$ is the image of $y$ in $X_Σ$. 

Suppose not. Then we have a filtered system of proper subvarieties $\{Y_Σ \subseteq X_Σ\}_{Σ \in \Fan^{sm,proj}}$ such that $Y_Σ \cap X_Σ^T \neq \varnothing$ and $Y_Σ \cap T \neq \varnothing$ for all $Σ$.%
\footnote{Indeed if $Y_Σ \cap X_Σ^T = \varnothing$ then the generic point of $Y_Σ$ is not in the localistion $R_Σ$ and if $Y_Σ \cap T = \varnothing$ then $Y_Σ \subseteq \partial X_Σ = X_Σ \setminus T$.} %
This contradicts the following lemma.

\begin{lemm}
For every proper irreducible (not necessarily toric) subvariety $Y_0 \subseteq X_0$ with $Y_0 \cap T \neq \varnothing$, there exists some $Σ \in \Fan^{sm,proj}$ such that the strict transform $Y_Σ \subseteq X_Σ$ does not contain any fixed points of the toric variety $X_Σ$.
\end{lemm}

\begin{proof}
The fixed points of $X_Σ$ correspond to maximal cones $σ \in Σ$, \cite[Thm.3.2.6]{CLS11}%
. On the other hand, the subvariety $Y^\circ_Σ := Y_Σ \cap T \subseteq T$ has an associated tropical variety $\trop(Y^\circ_Σ) \subseteq N_\RR$, \cite[Def.3.2.1]{MS15}%
, and we have
\[ O(σ) \in Y_Σ  \qquad \iff \qquad \trop(Y^\circ_Σ) \cap \relint(σ) \neq \varnothing, \]
 \cite[Thm.6.3.4]{MS15}%
. Here $\relint(σ)$ is the is the interior of $σ$ inside its affine span, \cite[p.60]{MS15}%
. The tropical variety $\trop(Y^\circ_Σ)$ is the support of a finite%
\footnote{
\cite[Prop.3.2.8]{MS15}%
 realizes $\trop(Y^\circ_Σ)$ as the support of a subcomplex of the Gröbner complex $Σ(I_{proj})$, which is finite by \cite[Cor.2.5.11]{MS15}%
.
} %
 rational fan of pure dimension $\dim Y^\circ_Σ$, \cite[Thm.3.3.5]{MS15}%
.%
\footnote{See also p.307, ``We give the field $K$ the trivial valuation, so $\trop(Y)$ can be given the structure of a polyhedral fan.'' \cite[p.307]{MS15}%
} %
Let $\Delta$ be this rational fan of $\trop(Y^\circ_Σ)$. %
Let $\Sigma_0'$ be a subdivision of $\Sigma_0$ containing a subdivision of $\Delta$, and choose a subdivision $\Sigma$ of $\Sigma_0'$ such that $X_\Sigma$ is smooth and projective. Since $Y_0$ has dimension $< d$, the maximal cones in $\Delta$ have dimension $< d$, so they all have trivial intersection with $\relint(σ)$ for maximal cones $σ$ of $\Sigma$. Hence, $O(σ) \notin Y_\Sigma$ for all maximal $σ$.
\end{proof}

\emph{The $K$-group $K_{-d}(R)$ is nonzero.} 
Consider the exact sequence 
\[ 
\underbrace{K_{1-d}(R_∞) \oplus K_{1-d}(k)}_{=0} \to 
K_{1-d}(R_∞ / I_∞) \to 
K_{-d}(R) \to 
\underbrace{K_{-d}(R_∞) \oplus K_{-d}(k)}_{=0} 
\]
Here the vanishing 
results from the fact that $R_∞$ is a filtered colimit of semilocalisations of smooth varieties, and regular rings have no negative $K$-theory. We claim that $K_{1-d}(R_∞ / I_∞) \cong \ZZ$. It suffices to show each $K_{1-d}(R_n / I_n) \cong \ZZ$ in a way compatible with the transition morphisms. Since the Krull dimension of the noetherian rings $R_n/I_n$ is $d-1$ we have $K_{1-d}(R_n / I_n) \cong H_{cdh}^{d-1}(R_n / I_n, \ZZ)$, \cite[Cor.D]{KST18}%
.%
\footnote{To show the desired nonvanishing, we could just as well use $K_{1-d}(R_n / I_n)[\tfrac{1}{p}] \cong KH_{1-d}(R_n / I_n)[\tfrac{1}{p}]$ in which case the isomorphism follows directly from the cdh descent spectral sequence and cohomological dimension of the cdh topology.
} %
Repeatedly applying closed Mayer-Vietoris one sees that $H_{cdh}^*(R_n / I_n, \ZZ)$ is the \v{C}ech cohomology of the closed covering of $\Spec R_n/I_n$ by irreducible components. In dimension $d-1$ this is
\[ H_{cdh}^{d-1}(R_n/I_n) = \coker \left ( \bigoplus_{e \in \cE} \ZZ \to  \bigoplus_{v \in \cV} \ZZ \right ) \]
where $\cV = X_n^T$ is the set of orbits of dimension zero and $\cE$ is the set of orbits of dimension one. Hence, this is the $H_0$ of the incidence graph $\cG$ whose vertices are the $\dim = d$ cones of $Σ_n$ and edges are $\dim = d-1$ cones. This graph has one connected component so $H_0 = \ZZ$. 
%
%
%
%
%
%
Connectivity does not change under subdivision. So we have
\begin{align*}
K_{-d}(R) 
= K_{1-d}(R_∞/I_∞) 
&= \colim K_{1-d}(R_n/I_n) 
\\&= \colim H^{d-1}_{cdh}(R_n/I_n, \ZZ) 
= \colim H_0(\cG_n, \ZZ)
&= \colim \ZZ
\\&&=\ZZ
\end{align*}
\end{proof}

\bibliographystyle{alpha}
\bibliography{bib}

\end{document}